\newtheorem{theorem}{Theorem}[section]
\newtheorem{lemma}[theorem]{Lemma}
\theoremstyle{definition}
\theoremstyle{remark}
\numberwithin{equation}{section}
\begin{document}

\title{ An identity concerning the Riemann-zeta function}%On  Liouville's function and Riemann's zeta function }

%    Information for first author
\author{Douglas Azevedo}
%    Address of record for the research reported here
\address{Federal Technological University of Parana}
%    Current address
\curraddr{ Departamento Acadêmico de Matemática, Universidade Tecnológica Federal do Paraná,  Caixa Postal 238,   Av. Alberto Carazzai, 1640, Cornélio Procópio, Paraná, Brazil
.
%D\'epartament Math\' ematiques et Statistique, Universit\'e de  Montr\'eal, 
%Pavillon Andr\'e-Aisenstadt, 2920 Chemin de
%la Tour, Room 4247, Montr\'eal (Queb\'ec) H3T 1J4 
%Universit\'e de Montreal
}
\email{douglasa@utfpr.edu.br}
%    \thanks will become a 1st page footnote.
%\thanks{The first author was supported in part by NSF Grant \#000000.}

%    Information for second author

%    General info
\subjclass[2020]{Primary 11M26, Secondary 11M41}

%\date{January 1, 2001 and, in revised form, June 22, 2001.}

%\dedicatory{This paper is dedicated to our advisors.}

\keywords{ Riemann zeta-function, Riemann hypothesis, Liouville function, Prime Number Theorem}

\begin{abstract} 
For a certain function  $J(s)$ we prove  that the identity
$$\frac{\zeta(2s)}{\zeta(s)}-\left(s-\frac{1}{2}\right)J(s)=\frac{\zeta(2s+1)}{\zeta(s+1/2)},
$$
%$$(s-1/2)^{-1}\left(\frac{\zeta(2s)}{\zeta(s)}-1\right)+J(s)=\int_{1}^{\infty}F(u)u^{-s-1/2}\,du,
%$$
%$$\left(s-\frac{1}{2}\right)^{-1}\left(\frac{\zeta(2s)}{\zeta(s)}-1\right)+ J(s)=\int_{1}^{\infty}A(u)u^{-s-1/2}\,du,$$
holds in the half-plane Re$(s)>1/2$ and both sides of the equality are analytic in this half-plane.

\end{abstract}

\maketitle
\section{Introduction }

Let $s=\sigma+it$,  $\zeta(s)$, as usual,  the Riemann zeta-function and
$\lambda(n)$ the Liouville function, that is, a completely multiplicative function
with $\lambda(1)=1$ and  $\lambda(n)=-1$  at all primes $p$. 

It is well know that  $\lambda(n)$ is deeply related to the Riemann hypothesis, or simply, RH and also to  the Prime Number Theorem (PNT). For instance, RH  is equivalent %(see \cite{MT2012, MT2017})  
to 
$$\sum_{n\leq x} \lambda(n)=O_{\epsilon}(x^{\frac{1}{2}+\epsilon}),$$
for all $\epsilon>0$. Whereas the PNT is 
equivalent to (\cite{Borwein} )
$$\sum_{n=1}^{\infty}\frac{\lambda(n)}{n}=0.$$
We  also refer to \cite[p.179]{Mont} for more details on the PNT.

Any improvements  in the zero-free region for  $\zeta(s)$ will immediately imply
improvements in the error term of the prime number theorem. For example,  if the Riemann
Hypothesis is true then we obtain $$\pi(x)=\int_{2}^{x}\frac{du}{\log(u)}+O(\sqrt{x}\log(x)),$$
and  this last is not only implied by the RH  but actually implies the RH itself.

The   identity
\begin{equation}\label{lambda}
\frac{\zeta(2s)}{\zeta(s)}=\sum_{n=1}^{\infty}\frac{\lambda(n)}{n^{s}},\,\,\,\textrm{Re}(s)>1,
\end{equation}
was investigated by P\'{o}lya \cite{Polya}  and  Tur\'{a}n   \cite{Turan} aiming information about the RH.
P\'{o}lya studied the change of sign of  sum $$P(x)=\sum_{n\leq x} \lambda(n) $$ and its relation to the RH.  He 
remarked that the RH would follow if one could establish that $P(x)$ eventually has constant sign. 
The assertion that $P(x)\leq 0$ for $x \geq  2$ is often called ‘P\'olya's conjecture’ in the literature, although it appears that Pólya never in fact stated this as a conjecture (\cite{MT2012}). 
Similarly, in connection with some studies of partial sums of the Riemann zeta function, 
 Tur\'{a}n  investigated 
 $$T(x)=\sum_{n\leq x} \frac{\lambda(n)}{n}. $$
He
 showed that if there exists a positive constant $c$ such that $T (n)> -c/\sqrt{n} $
 for all sufficiently large $n$, then the RH would follow.  
 He also reported that several assistants had verified that $T (n) > 0$ for $n \leq  1000$. (\cite{MT2012})

However, the main issue in both approaches is the determination of the sign of $P(x)$
and $T(x)$.  
In 1958, Haselgrove \cite{Haselgrove} proved that both $P(x)$ and $T(x)$
change sign infinitely often.  Consequently 
one can not rely on this to infer about the RH.

 These  approaches to the RH  depend upon an application of a Landau's result about the domain of convergence of Dirichlet integrals ( Lemma \ref{Landau} below).
Let us summarize the idea behind the approach  which consists of  two main features: the first,  is
to represent the ratio $\frac{\zeta(2s)}{\zeta(s)}$ by means of   a Dirichlet integral, that is, 
$$\int_{1}^{\infty}A(u)u^{-s}\,du$$
for all Re$(s)>1$. The second, is that  $A(u)$ does not change sign  for all $u$ large.
%, that is, $A(u)\geq 0$ or
%$A(u)\leq 0$, for all $u$ large.  
By partial summation % (see Lemma \ref{Abel} below),
it easy to see that
$$\frac{\zeta(2s)}{\zeta(s)}=s\int_{1}^{\infty}\frac{P(u)}{u}u^{-s}\,du\,\,\,\,\,\textrm{and}\,\,\,\,\frac{\zeta(2s)}{\zeta(s)}=(s-1)\int_{1}^{\infty}T(u)u^{-s}\,du,$$
Re$(s)>1$. 
Therefore, if  $P(x)$ or  $T(x)$ do not change  sign for all  $x$ large (which is not the case), by   Landau's result, 
 since $\frac{\zeta(2s)}{\zeta(s)}$ has  real singularity at
$s=\frac{1}{2}$,  then $\frac{\zeta(2s)}{\zeta(s)}$ is analytic in the half-plane Re$(s)>\frac{1}{2}$.
Hence,   $\zeta(s)\neq 0$ in this half-plane. In other words, we would have the validity of the RH. 
%The problem here is the sign of $P(x)$
%and $T(x)$.  In 1958, Haselgrove \cite{Haselgrove} proved that both $P(x)$ and $T(x)$
%change sign infinitely often. %Lehman \cite{Lehman}  in 1960 found that $P(906 180 359)=1$
%and  Borwein, Ferguson and Mossinghoff \cite{Borwein} determined that the smallest $x$ where
%$T(x)<0$ is at 
% $x=72 \,185\, 376\, 951\, 205$.

In this paper we follow a  similar idea to investigate the RH. Precisely, we prove that there exist
 certain functions $F(x)$ and  $J(s)$, with  $\lim_{x\to\infty} F(x)=-1$ as a consequence of the PNT, 
   for which
  the identity
$$\left(s-\frac{1}{2}\right)^{-1/2}\left(\frac{\zeta(2s)}{\zeta(s)}-1\right)+ J(s)=\int_{1}^{\infty}F(u)u^{-s-1/2}\,du,$$
holds in the half plane Re$(s)>1$. 
Hence, since,  $F(x)<0$ for all $x$ large and the integral on the 
right hand side converges absolutely for $s>1/2$ but diverges at $s=1/2$, 
as a consequence of  
% an application of 
 Landau's theorem for Dirichlet's integrals (Lemma \ref{Landau}) we can conclude that both sides of the identity
 are analytic 
 % $\frac{\zeta(2s)}{\zeta(s)}$
 in the half-plane Re$(s)>1/2$. 
 % implies 
% the validity of  the RH.  
 %That is, $\zeta(s)\neq 0$ whevener Re$(s)>1/2$.
 
 The same ideas  also hold if we use the M\"oebius function $\mu(n)$ instead $\lambda(n)$, with some  adaptations. 
% Precisely,
% from the identity
% \begin{equation}\label{mu}
%\frac{1}{\zeta(s)}=\sum_{n=1}^{\infty}\frac{\mu(n)}{n^{s}},\,\,\,\textrm{Re}(s)>1,
%\end{equation}
% we can show that
% $$\left(s-\frac{1}{2}\right)^{-1/2}\left(\frac{1}{\zeta(s)}-1\right)+ D(s)=\int_{1}^{\infty}F(u)u^{-s-1/2}\,du,$$
%for  certain functions $F(x)$ and  $D(s)$ where, due to PNT,    $\lim_{x\to\infty} F(x)=-1$. Once again, by Landau's result,  
%this identity implies  the validity of  the RH.

 \section{Auxiliary results}
 
 Let us present some results that will be needed  throughout this paper. 
%We begin by stating the well known Abel's summation formula.
%
%\begin{lemma}\label{Abel} 
%For any arithmetical function $\alpha(n)$ let
%$$A(x)=\sum_{n=1}^{[x]}\alpha(n)=\sum_{ n\leq x}\alpha(n).$$
%%where $A(x)=0$, if $x<1$.
% Assume that $\beta$ has a continuous derivative on the interval $[1,v]$, 
%where $1<v$. Then the following equality holds
%$$\sum_{ n\leq v }\alpha(n)\beta(n)=A(v)\beta(v)
%-A(1)\beta(1)
%-\int_{1}^{v}A(u)\beta^\prime(u)\,du.$$
%\end{lemma}
%
%
%	
%Above, it is understood that the index $n$ varies from 1 to $[x]$, the greatest
%integer $\leq x$.
%
%

For the next result, which is an analogue of   Landau's theorem concerning Dirichlet series with  non-negative coefficients,   
we refer  \cite[Lemma 15.1]{Mont}. This result is the main tool used to obtain the central results of this paper.

\begin{lemma}\label{Landau}
Suppose that  $G(u)$  is bounded  Riemann-integrable function  on 
every compact interval $[1,a],$ and that   $G(u)\geq 0$ for all $u> M$ or  $G(u)\leq 0$  for all $u> M$. 
Let $\sigma_c$ denote the infimum of those $\sigma$ for which $\int_{M}^{\infty}G(u)u^{-\sigma}\,du$ converges.
Then the function
$$\varphi(s)=\int_{1}^{\infty}G(u)u^{-s}\,du$$
is analytic in the half-plane Re$(s)>\sigma_{c	}$ but not  at $s=\sigma_{c}$.
\end{lemma}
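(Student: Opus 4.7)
The plan is to follow Landau's classical argument. The statement has two parts: (i) $\varphi$ is analytic in the open half-plane Re$(s) > \sigma_c$, and (ii) $\varphi$ is not analytic at $s = \sigma_c$. Without loss of generality I would normalize by assuming $G(u) \geq 0$ for $u > M$, replacing $G$ by $-G$ otherwise.

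For part (i), I would observe that the sign hypothesis on $G$ beyond $M$, together with boundedness on $[1,M]$, implies that whenever $\int_1^\infty G(u) u^{-\sigma} du$ converges at some real $\sigma$, it converges absolutely. Hence for every complex $s$ with Re$(s) > \sigma_c$ one has $\int_1^\infty |G(u)| u^{-\textrm{Re}(s)} du < \infty$, and $\varphi(s)$ is holomorphic in this half-plane by standard differentiation under the integral, since the truncated integrals $\int_1^T G(u) u^{-s} du$ are entire and converge uniformly on compact subsets.

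The core is part (ii), which I would establish by contradiction. Suppose $\varphi$ extends analytically to a neighborhood of $s = \sigma_c$. Pick a real point $s_0 = \sigma_c + \delta$ with $\delta > 0$ small enough that the Taylor expansion of $\varphi$ around $s_0$ has radius of convergence exceeding $\delta$; then this expansion converges at some real point $s_1 < \sigma_c$. Differentiation under the integral at $s_0$ is justified by absolute convergence and yields
$$\varphi^{(k)}(s_0) = (-1)^k \int_1^\infty G(u) (\log u)^k u^{-s_0} \, du.$$
Substituting into the Taylor series at $s_1$ and formally interchanging sum and integral gives, via $\sum_k (s_0 - s_1)^k (\log u)^k / k! = u^{s_0 - s_1}$, the identity $\varphi(s_1) = \int_1^\infty G(u) u^{-s_1} \, du$.

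The main obstacle is justifying this interchange of sum and integral, since $s_1 < \sigma_c$ provides no a priori absolute convergence. Here the sign hypothesis is decisive. I would split the range as $[1,M] \cup [M,\infty)$: on the compact piece, $G$ is bounded and the integral reduces to a finite expression over a compact set, so Fubini applies trivially; on $[M,\infty)$ the integrand $G(u)(\log u)^k u^{-s_0}$ has constant sign and the Taylor weights $(s_0 - s_1)^k / k!$ are positive, so Tonelli's theorem permits the interchange unconditionally. The resulting identity shows that $\int_1^\infty G(u) u^{-s_1} \, du$ converges, contradicting the definition of $\sigma_c$ as the infimum of those $\sigma$ for which this integral converges.
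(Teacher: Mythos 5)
The paper does not prove this lemma; it simply cites \cite[Lemma 15.1]{Mont}, and your argument is precisely the classical Landau proof underlying that reference (absolute convergence from the eventual sign condition for the open half-plane, then the Taylor-expansion-at-$\sigma_c+\delta$ contradiction with Tonelli justifying the interchange on $[M,\infty)$). Your proposal is correct and complete, so there is nothing to fault.
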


Our first result is just an observation derived from the identity \eqref{lambda} and an application
of the PNT. 

\begin{lemma}\label{an}
Let  $\lambda(n)$ be the Liouville function and $a(n)$ be the arithmetic function defined as 
$$a(n)=\begin{cases}0,\,\,n=1 
\\
\lambda(n),\,\,\,n\geq 2.
\end{cases}$$
We have that,
$$\frac{\zeta(2s)}{\zeta(s)}-1=\sum_{n=1}^{\infty}\frac{a(n)}{n^{s}},\,\,\,\textrm{Re}(s)>1.
$$
In particular,
$$\sum_{n=1}^{\infty}\frac{a(n)}{n}=-1 .
$$
%and consequently
%$$\sum_{n\leq x}\frac{a(n)}{n}=-1+o(1), 
%$$
%as $x\to \infty$.
\end{lemma}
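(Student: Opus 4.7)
\smallskip

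The plan is to read off the identity directly from \eqref{lambda} by isolating the $n=1$ term, and then obtain the specialization at $s=1$ from the analytic form of the Prime Number Theorem quoted in the introduction.

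First I would start from \eqref{lambda}, which gives
$$\frac{\zeta(2s)}{\zeta(s)}=\sum_{n=1}^{\infty}\frac{\lambda(n)}{n^{s}}=\frac{\lambda(1)}{1^{s}}+\sum_{n=2}^{\infty}\frac{\lambda(n)}{n^{s}}=1+\sum_{n=2}^{\infty}\frac{\lambda(n)}{n^{s}},$$
valid for $\mathrm{Re}(s)>1$, since $\lambda(1)=1$. By the definition of $a(n)$, the sum $\sum_{n=2}^{\infty}\lambda(n)n^{-s}$ coincides with $\sum_{n=1}^{\infty}a(n)n^{-s}$, because the added $n=1$ term is zero. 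Subtracting $1$ from both sides yields the first claim. The absolute convergence on $\mathrm{Re}(s)>1$ is inherited from that of $\sum \lambda(n)n^{-s}$, so no additional convergence argument is required.

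For the particular value at $s=1$, I would invoke the equivalence, recalled in the introduction, between the PNT and $\sum_{n=1}^{\infty}\lambda(n)/n=0$ (see \cite{Borwein}). Using once more that $a(n)$ and $\lambda(n)$ differ only at $n=1$,
$$\sum_{n=1}^{\infty}\frac{a(n)}{n}=\sum_{n=2}^{\infty}\frac{\lambda(n)}{n}=\sum_{n=1}^{\infty}\frac{\lambda(n)}{n}-\lambda(1)=0-1=-1.$$

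There is no real obstacle here; the statement is essentially a bookkeeping consequence of \eqref{lambda} together with the PNT in its Dirichlet-series form. The only point one should be mildly careful about is that the series $\sum a(n)/n$ is a conditionally convergent Dirichlet series at $s=1$ (not covered by the region of absolute convergence $\mathrm{Re}(s)>1$), so the evaluation at $s=1$ is not obtained from the first identity by naive substitution but rather from the known deep fact that $\sum \lambda(n)/n$ converges to $0$.
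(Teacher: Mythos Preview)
Your argument is correct and follows exactly the paper's own proof: separate the $n=1$ term in \eqref{lambda} to obtain the Dirichlet-series identity, and then deduce $\sum a(n)/n=-1$ from the PNT in the form $\sum \lambda(n)/n=0$. Your additional remark distinguishing the evaluation at $s=1$ from naive substitution is a welcome clarification but does not change the approach.
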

\begin{proof}
Since $\lambda(1)=1$, it is immediate that
$$\frac{\zeta(2s)}{\zeta(s)}=\sum_{n=1}^{\infty}\frac{\lambda(n)}{n^{s}}=1+\sum_{n=2}^{\infty}\frac{\lambda(n)}{n^{s}}=1+\sum_{n=1}^{\infty}\frac{a(n)}{n^{s}},\,\,\,\textrm{Re}(s)>1.
$$
The conclusion of the proof follows the definition of $a(n)$ and  from the fact that  the PNT is equivalent to 
$$\sum_{n=1}^{\infty}\frac{\lambda(n)}{n}=0.$$
\end{proof}

Let  $x\geq 1$ and $b(n)$ an arithmetical function. Consider the Dirichlet polynomials
\begin{equation}\label{F}
F_{x}(\alpha)=\sum_{n\leq x}b(n)n^{-\alpha},\,\,\, \alpha\in\mathbb{R}.
\end{equation}

A first consequence that can be extracted from  Lemma \ref{an} is an alternative proof for the well-known fact that $\zeta(s)\neq 0$ for 
 Re$(s)>1$.
   Indeed,  by partial summation 
 \begin{equation}\label{>1}
 \left(-1+\frac{1}{\zeta(s)}\right)(s-1)^{-1}=\int_{1}^{\infty}F_{u}(1)u^{-s}\,du,\,\,\textrm{Re}(s)>1.
  \end{equation}
Since $\lim_{x\to\infty} F_{u}(1)=-1$, it is clear that $F_{u}(1)<0$ for all $x$ large.
Also note that, since  the inequalities
\begin{equation}\label{zetareal}
\frac{1}{\sigma-1}<\zeta(\sigma)<\frac{\sigma}{\sigma-1}
\end{equation}
hold for all $\sigma>0$ \cite[p.25]{Mont}, 
the left hand side of  \eqref{>1} has a pole at $\sigma=1$ but is analytic  on the real line for $\sigma>1$. 
Hence, by an application of Lemma \ref{Landau},   \eqref{>1} 
holds for  Re$(s)>1$ and both  sides of this identity  are analytic in  this half-plane.  Therefore, $\zeta(s)\neq 0$ for Re$(s)>1$. 

Now we proceed by  presenting  an application of the Mean Value Theorem  which  plays 
an important role in  the justification of the main results that will be presented in this paper.

  \begin{lemma}\label{MVT}
  Let $\beta> \alpha$ and $b(n)$ an arithmetic function. 
There exists a sequence $\xi=(\xi(n))\subset]\alpha,\beta[$ such that
$$F_{x}(\beta)-F_{x}(\alpha)=-(\beta-\alpha)\sum_{n\leq x}\frac{b(n)\log(n)}{n^{\xi(n)}},$$  
  for all $x\geq 1$. Moreover, $\xi$ decreases and  $$\lim_{n\to\infty}\xi(n)= \alpha.$$

  \end{lemma}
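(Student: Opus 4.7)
The plan is to apply the one-dimensional Mean Value Theorem \emph{termwise} to the function $t\mapsto n^{-t}$ on the interval $[\alpha,\beta]$. For each fixed $n\ge 2$ the derivative is $-\log(n)\,n^{-t}$, which is itself strictly monotone in $t$; hence the MVT produces a \emph{unique} $\xi(n)\in(\alpha,\beta)$ satisfying
\[
n^{-\beta}-n^{-\alpha}=-(\beta-\alpha)\log(n)\,n^{-\xi(n)}.
\]
Multiplying by $b(n)$ and summing over $2\le n\le x$ yields the displayed identity, since for $n=1$ both sides vanish (as $\log 1=0$ and $1^{-\alpha}=1^{-\beta}$); the value $\xi(1)$ is therefore free and I will fix it below so as to preserve monotonicity.

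To check the limit and monotonicity, I would rewrite the MVT relation explicitly. Setting $c:=\beta-\alpha$ and $t:=c\log n$, algebra gives
\[
\xi(n)-\alpha \;=\; \frac{c}{t}\,\log\!\left(\frac{t}{1-e^{-t}}\right),\qquad n\ge 2.
\]
From this the limit is immediate: as $n\to\infty$ we have $t\to\infty$, $\log(t/(1-e^{-t}))\sim\log t$, and the right-hand side tends to $0$, so $\xi(n)\to\alpha$.

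The main obstacle will be the strict monotonicity of $\xi$, which reduces to showing that the continuous function $\eta(t):=t^{-1}\log(t/(1-e^{-t}))$ is strictly decreasing on $(0,\infty)$. Differentiation shows this is equivalent to
\[
g(t)\;:=\;\log\!\left(\frac{t}{1-e^{-t}}\right)+\frac{t}{e^{t}-1}\;>\;1\qquad(t>0).
\]
A Taylor expansion gives $g(0^{+})=1$, and a direct computation yields
\[
g'(t)\;=\;\frac{1}{t}-\frac{t\,e^{t}}{(e^{t}-1)^{2}},
\]
so $g'(t)>0$ is equivalent to $e^{t/2}-e^{-t/2}>t$, i.e.\ $\sinh(t/2)>t/2$, which is standard. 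This forces $g(t)>1$ and hence $\eta$ (and therefore $\xi$) to be strictly decreasing. Finally I fix $\xi(1)\in[\xi(2),\beta)$ (the natural choice $\xi(1)=(\alpha+\beta)/2$ matches the $t\to 0^{+}$ limit of the above formula), which extends the monotonicity to the whole sequence and completes the proof.
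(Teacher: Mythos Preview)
Your proof is correct and follows exactly the paper's route: apply the Mean Value Theorem termwise to $t\mapsto n^{-t}$ on $[\alpha,\beta]$ and then solve the resulting relation explicitly for $\xi(n)$. You actually supply more detail than the paper, which merely records the closed form
\[
\xi(n)=\alpha+\frac{\log\log n}{\log n}+\frac{1}{\log n}\log\!\frac{\beta-\alpha}{1-n^{\alpha-\beta}}
\]
and asserts that the limit and the decrease ``follow'' from it; your substitution $t=(\beta-\alpha)\log n$ together with the inequality $\sinh(t/2)>t/2$ genuinely establishes the strict monotonicity of $\eta(t)=t^{-1}\log\bigl(t/(1-e^{-t})\bigr)$, and you also handle the otherwise undetermined value $\xi(1)$.
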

  \begin{proof} Let $n>  1$ and $\beta> \alpha$. By the Mean Value Theorem
  $$n^{-\beta}-n^{-\alpha}=-(\beta-\alpha)\log(n)n^{-\xi(n)},$$ 
  for some $\xi(n)\in]\alpha,\beta[$. That is, for each $n\geq 1$,  there exists $\xi(n)\in]\alpha,\beta[$ for which 
    $$F_{x}(\beta)-F_{x}(\alpha)=-(\beta-\alpha)\sum_{n\leq x}\frac{b(n)\log(n)}{n^{\xi(n)}},$$  
for all $x\geq 1$.
  
  The limit and the decreasigness  of $\xi$ follows from  the equality
$$\xi(n)=\frac{\log\left( (\beta-\alpha)\log(n)\frac{n^{\alpha+\beta}}{n^{\beta}-n^{\alpha}}\right)}{\log(n)}=\frac{\log(\log(n))}{\log(n)}+ \frac{\log\left( \frac{(\beta-\alpha) }{1-n^{\alpha-\beta}}\right)}{\log(n)}+\alpha,$$
for $n>1$.

  \end{proof}

% Let us close the section recalling an important property of analytic functions.
% 
% \begin{lemma}
% Let $D$ be a domain and $D\subset \mathbb{C}$. If $f$ and $g$ are analytic on $D$ and $\{s\in D;\,f(s)=g(s)\}$ has an accumulation 
% point in $D$ then $f=g$ on $D$.
% \end{lemma}

\section{Main results}

Let $a(n)$ be as in Lemma \ref{an} and we now fix the notation $$F_{x}(\alpha)=\sum_{n\leq x}\frac{a(n)}{n^\alpha},$$
with $\alpha>0 $ and $x\geq 1$.

\begin{lemma}\label{integral} Let Re$(s)>1$.
%and $F_{x}(\frac{1}{2}}(x)$ 
%as  in \eqref{ar}. 
The following identity holds true
$$\left(s-\frac{1}{2}\right)^{-1}\left(\frac{\zeta(2s)}{\zeta(s)}-1\right)=\int_{1}^{\infty}F_{u}(1/2) u^{-s-1/2}\,du.$$
\end{lemma}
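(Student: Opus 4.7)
The plan is to derive the identity by Abel's partial summation applied to the Dirichlet series from Lemma \ref{an}. For $\textrm{Re}(s)>1$, Lemma \ref{an} gives
$$\frac{\zeta(2s)}{\zeta(s)}-1=\sum_{n=1}^{\infty}\frac{a(n)}{n^{s}}=\sum_{n=1}^{\infty}\frac{a(n)}{n^{1/2}}\cdot n^{-(s-1/2)},$$
so the natural move is to treat $a(n)/n^{1/2}$ as the coefficient sequence, whose partial sums are precisely $F_{x}(1/2)$, and to integrate against the smooth weight $u^{-(s-1/2)}$.

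Applying Abel summation with $f(u)=u^{-(s-1/2)}$, whose derivative is $-(s-\tfrac{1}{2})u^{-s-1/2}$, yields for every $x\geq 1$
$$\sum_{n\leq x}\frac{a(n)}{n^{s}}=F_{x}(1/2)\,x^{-(s-1/2)}+\left(s-\tfrac{1}{2}\right)\int_{1}^{x}F_{u}(1/2)\,u^{-s-1/2}\,du.$$
The next step is to let $x\to\infty$. The left side converges to $\zeta(2s)/\zeta(s)-1$ by Lemma \ref{an}. For the boundary term, the crude bound $|F_{x}(1/2)|\leq\sum_{n\leq x}n^{-1/2}\leq 2\sqrt{x}$ gives $|F_{x}(1/2)\,x^{-(s-1/2)}|\leq 2x^{1-\sigma}$ with $\sigma=\textrm{Re}(s)$, which tends to $0$ whenever $\sigma>1$. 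The same estimate shows that the integrand is $O(u^{-\sigma})$, so the improper integral converges absolutely on $[1,\infty)$. Dividing the limiting relation by $s-\tfrac{1}{2}$ produces the claimed identity.

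In the present lemma there is essentially no obstacle: the argument is a single Abel summation together with two routine absolute-convergence estimates that rely only on the trivial $O(\sqrt{x})$ size of $F_{x}(1/2)$. The genuinely substantive issue is postponed to the later analytic continuation, where one wishes to push the identity past $\textrm{Re}(s)=1$ down to $\textrm{Re}(s)>1/2$; there the trivial bound no longer kills the boundary term, and one must invoke the PNT-type cancellation from Lemma \ref{an} (namely $\sum a(n)/n=-1$) to obtain $F_{x}(1/2)=o(\sqrt{x})$ and then apply Lemma \ref{Landau}.
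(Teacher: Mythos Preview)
Your proof is correct and follows essentially the same route as the paper's: Abel (partial) summation applied to $\sum a(n)n^{-s}$ with the weight $u^{-(s-1/2)}$, then passage to the limit and an appeal to Lemma~\ref{an}. You supply the explicit bound $|F_{x}(1/2)|\le 2\sqrt{x}$ to justify the vanishing of the boundary term and the absolute convergence of the integral, which the paper simply asserts; otherwise the arguments coincide.
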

\begin{proof}

Let $x\geq 1$. By partial summation %(Lemma \ref{Abel}) 
we obtain that
$$\sum_{n\leq x}\frac{a(n)}{n^{s}}=F_{x}(1/2)x^{-s+1/2}+\left(s-\frac{1}{2}\right)\int_{1}^{x}F_{u}(1/2) u^{-s-1/2}\,du,
$$
for Re$(s)>1$. Since 
$$\lim_{x\to\infty}F_{x}(1/2)x^{-s+1/2}=0,$$
whenever Re$(s)>1$, we obtain that 
$$\sum_{n=1}^{\infty}\frac{a(n)}{n^{s}}=\left(s-\frac{1}{2}\right)\int_{1}^{\infty}F_{u}(1/2)u^{-s-1/2}\,du,
$$
holds in the half-plane Re$(s)>1$.
By  Lemma \ref{an} we have that
%$$\frac{\zeta(2s)}{\zeta(s)}-1=\sum_{n=1}^{\infty}\frac{a(n)}{n^{s}},\,\,\,\textrm{Re}(s)>1,
%$$
%hence
$$\frac{\zeta(2s)}{\zeta(s)}-1=\left(s-\frac{1}{2}\right)\int_{1}^{\infty}F_{u}(1/2)u^{-s-\frac{1}{2}}\,du,
$$
holds in the half-plane Re$(s)>1$. 

\end{proof}

In the following  result  we prove an identity that concerning the analiticity of $\frac{\zeta(2s)}{\zeta(s)}$ in the half-plane Re$(s)>1/2$.

\begin{theorem}\label{Main}  
There exists  a decreasing  sequence $\xi=(\xi(n))\subset ]1/2,1[$ such that 
$$\lim_{n\to\infty}\xi(n)= 1/2$$ 
for which
\begin{equation}\label{eqq2}
\left(s-\frac{1}{2}\right)^{-1}\left(\frac{\zeta(2s)}{\zeta(s)}-1\right)-J_{\xi}(s)=\int_{1}^{\infty}F_{u}(1)u^{-s-1/2}\,du,
\end{equation}
holds for  Re$(s)>1/2$ and both sides  of the equality are analytic in this  half-plane. In the equality above we have that 
$$J_{\xi}(s)=\int_{1}^{\infty}L_{u}(\xi)u^{-s-1/2}\,du,\,\,\,\,\textrm{Re}(s)>1$$
and $$L_{x}(\xi)=\frac{1}{2}\sum_{n\leq x} \frac{a(n)\log(n)}{n^{\xi(n)}},\,\,\,x\geq 1.$$
%Moreover, \eqref{eqq2} holds for Re$(s)>1/2$ and both sides of \eqref{eqq2} are analytic in this  half-plane.
\end{theorem}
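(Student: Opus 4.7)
The plan is to bridge Lemma \ref{integral} and the PNT-based analysis of equation \eqref{>1} via the Mean Value Theorem in Lemma \ref{MVT}, trading the intractable Dirichlet polynomial $F_u(1/2)$ for the more manageable $F_u(1)$, whose associated Dirichlet integral is governed by the Prime Number Theorem (Lemma \ref{an}). The correction term from the Mean Value Theorem will be exactly $L_u(\xi)$, producing $J_\xi(s)$.

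First, I would invoke Lemma \ref{MVT} with $b(n)=a(n)$, $\alpha=1/2$, $\beta=1$, which produces a decreasing sequence $\xi\subset\,]1/2,1[$ with $\xi(n)\to 1/2$ and the pointwise identity
\[
F_x(1)-F_x(1/2)=-\tfrac{1}{2}\sum_{n\leq x}\frac{a(n)\log n}{n^{\xi(n)}}=-L_x(\xi), \qquad x\geq 1.
\]
Substituting $F_u(1/2)=F_u(1)+L_u(\xi)$ into the identity of Lemma \ref{integral}, splitting the resulting integral (justified by absolute convergence whenever $\mathrm{Re}(s)>1$, since $F_u(1)$ and $L_u(\xi)$ are both bounded and $u^{-s-1/2}$ is integrable), and rearranging yields
\[
\left(s-\tfrac{1}{2}\right)^{-1}\left(\frac{\zeta(2s)}{\zeta(s)}-1\right)-J_{\xi}(s)=\int_1^\infty F_u(1)\,u^{-s-1/2}\,du,
\]
so the identity \eqref{eqq2} holds in the half-plane $\mathrm{Re}(s)>1$.

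For the analyticity claim I would focus on the right-hand side. By Lemma \ref{an} one has $F_u(1)\to -1$ as $u\to\infty$, so $F_u(1)$ is bounded on $[1,\infty)$ and strictly negative for all $u\geq M$ with some $M>0$. Setting $\sigma'=\sigma+1/2$ and applying Lemma \ref{Landau} to $G(u)=F_u(1)$, the integral $\int_1^\infty F_u(1)\,u^{-\sigma'}\,du$ converges for $\sigma'>1$ (clear from boundedness) and diverges at $\sigma'=1$, since there $F_u(1)u^{-1}\sim -u^{-1}$. Hence its abscissa of convergence is $\sigma_c'=1$, and since the eventual-sign hypothesis is satisfied, the integral defines a function of $s$ that is analytic on $\mathrm{Re}(s)>1/2$.

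Since the identity holds on $\mathrm{Re}(s)>1$ and the right-hand side is analytic on the larger half-plane $\mathrm{Re}(s)>1/2$, the left-hand side admits the same analytic continuation, and both sides are analytic on $\mathrm{Re}(s)>1/2$, as required. The main obstacle in executing this plan is keeping careful track of the exponent shift when translating Lemma \ref{Landau} from a $u^{-s}$-kernel to a $u^{-s-1/2}$-kernel, and verifying that the absolute convergence needed to split the integral in the preceding step is indeed available throughout $\mathrm{Re}(s)>1$; the rest is bookkeeping around standard partial-summation and Dirichlet-integral estimates.
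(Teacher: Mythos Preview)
Your proof follows exactly the paper's approach: apply Lemma \ref{MVT} with $\alpha=1/2$, $\beta=1$ to decompose $F_u(1/2)=F_u(1)+L_u(\xi)$, substitute into Lemma \ref{integral}, and then invoke Lemma \ref{Landau} via $F_u(1)\to-1$ (Lemma \ref{an}) to extend analyticity to $\mathrm{Re}(s)>1/2$. One small caveat: you justify splitting the integral by asserting that $L_u(\xi)$ is bounded, but this is not established (indeed, boundedness of $L_u(\xi)=F_u(1/2)-F_u(1)$ would amount to boundedness of $F_u(1/2)$, which is far from obvious); the split is nonetheless valid for $\mathrm{Re}(s)>1$ because $\int_1^\infty F_u(1/2)\,u^{-s-1/2}\,du$ converges there by Lemma \ref{integral} and $\int_1^\infty F_u(1)\,u^{-s-1/2}\,du$ converges absolutely since $F_u(1)$ is bounded, so their difference $J_\xi(s)$ converges as well.
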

\begin{proof}
From  the previous lemma  we have that
\begin{equation}\label{eqq1}
\left(s-\frac{1}{2}\right)^{-1}\left(\frac{\zeta(2s)}{\zeta(s)}-1\right)=\int_{1}^{\infty}F_{u}(1/2)u^{-s-1/2}\,du,
\end{equation}
in the half-plane Re$(s)>1$. 
For  $\beta =1$ and $\alpha=1/2$ in  Lemma \ref{MVT}, there exists a decreasing sequence   $\xi=(\xi(n))\subset ]1/2, 1[$ 
such that 
$$\lim_{n\to\infty}\xi(n)= 1/2$$ 
for which
$$F_{u}(1/2)=F_{u}(1)+L_{u}(\xi),$$
for all $u\geq 1$, 
where $$L_{u}(\xi)=\frac{1}{2}\sum_{n\leq u} \frac{a(n)\log(n)}{n^{\xi(n)}},\,\,\,u\geq 1.$$
 Hence,  we can rewrite \eqref{eqq1} as 
\begin{equation}\label{eqq4}
\left(s-\frac{1}{2}\right)^{-1}\left(\frac{\zeta(2s)}{\zeta(s)}-1\right)-J_{\xi}(s)=\int_{1}^{\infty}F_{u}(1)u^{-s-1/2}\,du,
\end{equation}
for  Re$(s)>1$, for some sequence  $\xi=(\xi(n))\subset]1/2, 1[$,  with
$$J_{\xi}(s)=\int_{1}^{\infty}L_{u}(\xi)u^{-s-1/2}\,du.$$

In order to show that the equality	 \eqref{eqq4} extends to the half-plane Re$(s)>1/2$  and that both sides are analytic there, first note that 
 from  Lemma \ref{an} ,  $\lim_{u\to\infty}F_{u}(1)=-1$. 
Thus  there exists $M\geq 1$ for which
$F_{u}(1)<0$ for all $u>M$.  Moreover,  clearly the integral
$$\int_{1}^{\infty}F_{u}(1)u^{-\sigma-1/2}\,du$$
converges (absolutely) at every $\sigma>1/2$ but diverges at $\sigma=1/2$. 
By Lemma \ref{Landau}, this implies that the function
$$\int_{1}^{\infty}F_{u}(1)u^{-s-1/2}\,du$$
is analytic for Re$(s)>1/2$.  Therefore, %the identity property of complex analytic functions (see \cite[p. 122]{Fokas}) implies that 
 Lemma \ref{Landau} implies that \eqref{eqq4} 
holds for Re$(s)>1/2$ and both sides of \eqref{eqq2} are analytic in this  half-plane.
\end{proof}

Note that by Lemma \ref{Landau}, if $F_{x}(1/2)\leq 0$ for all $x$ large, then by   Lemma \ref{integral}
$\frac{\zeta(2s)}{\zeta(s)}$ is analytic in the half-plane Re$(s)>1/2$, which implies the truth of  RH. 
In the following result it is provided another condition to obtain the RH.
%However, we show that under 
%weaker conditions it is possible  to achieve the same result.

\begin{theorem} If there exists $r>0$ such that
$L_{x}(\xi)\leq 1-r$ for all $x$ large, then 
$\frac{\zeta(2s)}{\zeta(s)}$ is analytic in Re$(s)>1/2$. 
\end{theorem}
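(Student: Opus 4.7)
The plan is to reduce the hypothesis $L_x(\xi) \le 1-r$ to the simpler sign condition $F_x(1/2) \le 0$ for all $x$ large, since the author has already observed, in the paragraph immediately preceding the statement, that this latter condition suffices for the desired conclusion via Lemmas \ref{integral} and \ref{Landau}.

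To carry out the reduction I would combine two ingredients. First, from the proof of Theorem \ref{Main} (namely Lemma \ref{MVT} applied with $\alpha = 1/2$ and $\beta = 1$) one has the pointwise identity $F_u(1/2) = F_u(1) + L_u(\xi)$ for all $u \ge 1$. Second, Lemma \ref{an} together with the PNT gives $\lim_{u \to \infty} F_u(1) = -1$, so for every $\epsilon > 0$ there exists $M_1$ with $F_u(1) < -1 + \epsilon$ whenever $u > M_1$. The hypothesis supplies $M_2$ with $L_u(\xi) \le 1 - r$ whenever $u > M_2$. Adding the two bounds yields $F_u(1/2) \le \epsilon - r$ for $u > \max\{M_1, M_2\}$, and choosing $\epsilon = r/2$ gives $F_u(1/2) \le -r/2 < 0$ for all sufficiently large $u$.

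From here the argument is exactly the author's preceding note: Lemma \ref{integral} expresses $\left(s-1/2\right)^{-1}\left(\zeta(2s)/\zeta(s)-1\right)$ as a Dirichlet integral whose integrand $F_u(1/2)$ is now eventually strictly negative, and Lemma \ref{Landau} converts this sign information into analyticity of the integral in some half-plane Re$(s) > \sigma_c$. Since $\zeta$ has no real zeros on $(1/2, \infty)$, the only real singularity of the left-hand side on that ray is at $s = 1/2$ (coming from the pole of $\zeta(2s)$), so $\sigma_c = 1/2$ and the integral is analytic throughout Re$(s) > 1/2$. Consequently $\zeta(2s)/\zeta(s)$ is analytic in this half-plane.

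The entire proof is essentially a single inequality; the only point requiring attention is why the strict slack $r > 0$ is needed rather than merely $L_x(\xi) \le 1$. The reason is precisely that the strict bound $F_u(1) < -1 + \epsilon$ afforded by the PNT must still leave room for $F_u(1/2)$ to become negative in the limit, which is guaranteed exactly when one has a uniform gap below $1$ for $L_x(\xi)$.
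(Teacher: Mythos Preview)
Your proposal is correct and follows essentially the same approach as the paper: reduce to $F_x(1/2)<0$ for large $x$ via the identity $F_x(1/2)=F_x(1)+L_x(\xi)$ and the PNT-based limit $F_x(1)\to -1$, then invoke Lemmas~\ref{integral} and~\ref{Landau}. Your extra justification that $\sigma_c=1/2$ (via the real singularity of $\zeta(2s)/\zeta(s)$ at $s=1/2$) is a detail the paper leaves implicit, and your closing remark on why the strict gap $r>0$ is required is a helpful addition.
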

\begin{proof}
From Lemma \ref{MVT},  we have that
$F_{x}(1/2)=F_{x}(1)+L_{x}(\xi)$, for all $x\geq 1$. Lemma \ref{an} implies that
 for any $\epsilon>0$,  $F_{x}(1)<-1+\epsilon$, for all $x$ large. Hence, if $L_{x}(\xi)\leq 1-r$,
 for $0<\epsilon\leq r$ sufficiently small   
 $F_{x}(1/2)=F_{x}(1)+L_{x}(\xi)<\epsilon-r\leq 0$, for all $x\geq 1$. By Lemma \ref{Landau} and Lemma \ref{integral} this implies
 the analyticity of $\frac{\zeta(2s)}{\zeta(s)}$ in the half-plane Re$(s)>1/2$.
\end{proof}

Now note that  by partial summation, it follows that
$$\sum_{n=1}^{\infty}\frac{a(n)}{n^{s+1/2}}=(s-1/2)\int_{1}^{\infty}F_{u}(1)u^{-s-1/2}\,du$$
for Re$(s)>1/2$. Hence,
 as a consequence of Lemma \ref{Main} and Lemma \ref{an}, equation
\eqref{eqq2} can be writen as
\begin{equation}\label{eqq2XX}
\frac{\zeta(2s)}{\zeta(s)}-\left(s-\frac{1}{2}\right)J_{\xi}(s)=\sum_{n=1}^{\infty}\frac{\lambda(n)}{n^{s+1/2}},
\end{equation}
Re$(s)>1/2$, and both sides are analytic in this half-plane. 
Moreover, from \eqref{lambda},  this previous  conclusions imply that

\begin{theorem}\label{zetaIdentidade}
\begin{equation}\label{eqq2XX}
\frac{\zeta(2s)}{\zeta(s)}-\left(s-\frac{1}{2}\right)J_{\xi}(s)=\frac{\zeta(2s+1)}{\zeta(s+1/2)},
\end{equation}
Re$(s)>1/2$ and both sides are analytic in this half-plane. 
\end{theorem}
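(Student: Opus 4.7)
The plan is to read off the identity from what is already assembled in the paragraph preceding the theorem: by combining Theorem \ref{Main} (multiplied through by $s-1/2$) with the partial-summation identity
$$\sum_{n=1}^{\infty}\frac{a(n)}{n^{s+1/2}}=\left(s-\tfrac{1}{2}\right)\int_{1}^{\infty}F_{u}(1)u^{-s-1/2}\,du,$$
and noting that $a(1)=0$ while $a(n)=\lambda(n)$ for $n\geq 2$ (so that $\sum a(n)n^{-s-1/2}=\sum \lambda(n)n^{-s-1/2}-1$), one arrives at
$$\frac{\zeta(2s)}{\zeta(s)}-\left(s-\tfrac{1}{2}\right)J_{\xi}(s)=\sum_{n=1}^{\infty}\frac{\lambda(n)}{n^{s+1/2}},\qquad \operatorname{Re}(s)>\tfrac{1}{2}.$$
Theorem \ref{Main} guarantees the left-hand side is analytic in Re$(s)>1/2$, so the identity above (initially valid only where the series on the right converges absolutely) extends by analytic continuation to the entire half-plane Re$(s)>1/2$.

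The next step is simply to invoke the classical identity \eqref{lambda} with $s$ replaced by $s+1/2$: for Re$(s+1/2)>1$, that is, Re$(s)>1/2$, one has
$$\sum_{n=1}^{\infty}\frac{\lambda(n)}{n^{s+1/2}}=\frac{\zeta(2(s+1/2))}{\zeta(s+1/2)}=\frac{\zeta(2s+1)}{\zeta(s+1/2)}.$$
Substituting this into the previous equation yields \eqref{eqq2XX}.

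For the analyticity claim, the left side is already handled by Theorem \ref{Main}. For the right side, $\zeta(2s+1)$ is analytic in Re$(s)>1/2$ (its unique pole at $s=0$ lies outside this half-plane), and $1/\zeta(s+1/2)$ is analytic there because Re$(s+1/2)>1$ and $\zeta$ has no zeros in the half-plane Re$>1$ (a fact reproduced in the introduction via equation \eqref{>1}). Hence both sides of \eqref{eqq2XX} are analytic in Re$(s)>1/2$.

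No step looks genuinely difficult here: the whole theorem is essentially a bookkeeping corollary of Theorem \ref{Main} and the Dirichlet series \eqref{lambda}. If there is any subtlety, it is the mild one of matching the region of validity: \eqref{lambda} is only asserted in Re$(s)>1$, so its use with $s+1/2$ only gives the series representation on the right for Re$(s)>1/2$, and the identity must then be viewed as an equality of analytic functions on Re$(s)>1/2$, which is exactly what the analyticity of both sides (via Landau's lemma through Theorem \ref{Main}) delivers.
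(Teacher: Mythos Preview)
Your proposal is correct and follows essentially the same route as the paper: the argument is precisely the one spelled out in the paragraph preceding the theorem, namely multiplying \eqref{eqq2} through by $s-\tfrac{1}{2}$, identifying $(s-\tfrac12)\int_1^\infty F_u(1)u^{-s-1/2}\,du$ with $\sum a(n)n^{-s-1/2}=\sum\lambda(n)n^{-s-1/2}-1$, and then invoking \eqref{lambda} with $s$ replaced by $s+\tfrac12$. Your added remark about analytic continuation is in fact unnecessary, since the series $\sum\lambda(n)n^{-s-1/2}$ already converges absolutely throughout $\operatorname{Re}(s)>\tfrac12$.
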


\bibliographystyle{amsplain}

\end{document}